\numberwithin{equation}{section}
\numberwithin{figure}{section}
\theoremstyle{plain}
\newtheorem{thm}{Theorem}[section]
\newtheorem{prop}[thm]{Proposition}
\newtheorem{lem}[thm]{Lemma}
\newtheorem{rem}[thm]{Remark}
  \newcounter{casectr}
\theoremstyle{definition}
\theoremstyle{remark}
\newcommand{\RR}{\mathbb{R}}
\newcommand{\PP}{\mathbb{P}}
\newcommand{\EE}{\mathbb{E}}
\newcommand{\FFF}{\mathcal{F}}
\begin{document}
\title{On blow up NLS with a multiplicative noise}
\author{Chenjie Fan\footnote{State Key Laboratory of Mathematical Sciences, Academy of Mathematics and Systems Science, Chinese Academy of Sciences,
Beijing, China,
fancj@amss.ac.cn}, Junzhe Wang\footnote{School of Mathematical Sciences,
University of Chinese Academy of Sciences,
Beijing, China,
wangjunzhe21@mails.ucas.ac.cn}}
\maketitle

\begin{abstract}
It is of significant interest to understand whether a noise will speed up or prevent blow up. Under certain nondegenerate conditions, \cite{dD2005Blowup} proved a multiplicative noise will speed up blow up of NLS, in the sense that, blow up can happen in any short time with positive probability. We prove that such probability is indeed quite small, and provide a large deviation type upper bound.
\end{abstract}

\section{Introduction}

\subsection{Statement of the main result}

In this note, we consider a focusing stochastic nonlinear Schr{\"o}dinger equation (SNLS) with a  multiplicative noise in $\mathbb{R}^{3}$,
  \begin{equation}\label{eq:SNLS}
    \left\{
    \begin{array}{ll}
    i\partial_t u-\Delta u =|u|^2 u +u\circ \dot{W}, \\
    u(0,x)=u_0(x)\in H^{1}(\mathbb{R}^{3}).
    \end{array}
    \right.
  \end{equation}
 Here, the initial data $u_{0}$ is deterministic. We use $u\circ \dot{W}$  to denote the Stratonovich product. The noise we consider is colored in space and white in time, and we will focus on real-valued noise, but our results extend easily to complex-valued noise. More precisely, we consider\footnote{For simplicity, one may replace the smooth condition on $\phi$ below by $\phi$ be a Hilbert-Schmidt operator from $L^{2}$ to $H^{100}$, and the essence of the article will not change.} $W=\sum_{k=0}^\infty \beta_k(t,\omega)\phi e_k(x)$, with
 \begin{equation*}
  \left\{
    \begin{array}{lr}
    \{e_k(x)\} \text{ be an orthonormal basis of }L^2(\mathbb{R}^3),&\\
    \{\beta_k(t)\}\text{ be i.i.d. Brownian Motions } &\\
    \qquad \,\, \text{in a probability space } (\Omega,\FFF,\PP) \text{ with natural filtration } (\FFF_t)_{t\ge 0}, &\\
    \phi \in R(L^2(\mathbb{R}^3);W^{1,12}(\mathbb{R}^3))\cap R(L^2(\mathbb{R}^3);H^{1}(\mathbb{R}^3)) .&\text{(smooth condition)}
    \end{array}
\right.
 \end{equation*}
Here $R(X;Y)$ means the space of $\gamma$-radonifying operator from Banach space $X$ to $Y$. Our main result is
\begin{thm}\label{thm:main}
  For any initial data $u_0 \in H^1(\RR^3)$, there exists $T_0(\|u_0\|_{H^1})>0$, and a universal constant $\beta > 0$ such that the local solution $u$ of SNLS satisfies
  \begin{equation}
  \ln \PP(u \text{ blows up in } [0,T])\lesssim_{\|u_{0}\|_{H^{1}}, \phi} -T^{-\beta}    
  \end{equation}
  for any $0<T\le T_0$. In particular we can take $\beta=\frac{1}{4}$ here.
\end{thm}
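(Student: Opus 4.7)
My plan is to combine a pathwise Doss--Sussmann gauge transformation, a Strichartz-based perturbation analysis for the resulting random NLS, and a Gaussian concentration bound for the driving noise. Since $W$ is real-valued, setting $v := e^{iW} u$ and applying the Stratonovich chain rule eliminates the multiplicative noise: $v$ satisfies the pathwise (but random-coefficient) equation
\begin{equation*}
i \partial_t v - \Delta v = |v|^2 v - i \bigl( 2 \nabla W \cdot \nabla v + (\Delta W) v \bigr) - |\nabla W|^2 v.
\end{equation*}
Because $|e^{iW}| = 1$, blow-up of $u$ on $[0,T]$ is equivalent to blow-up of $v$ on $[0,T]$, so it suffices to control the latter.

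Set $X := W^{1,12}(\RR^3) \cap H^1(\RR^3)$ and consider the noise-smallness event
\begin{equation*}
A_{\epsilon, T} := \Bigl\{ \sup_{0 \le t \le T} \|W(t)\|_X \le \epsilon \Bigr\}.
\end{equation*}
The deterministic heart of the argument is to show that on $A_{\epsilon,T}$, provided $\epsilon \le c(\|u_0\|_{H^1})\, T^\alpha$ for some explicit $\alpha \in (0, 1/2)$, the solution $v$ is a small perturbation of the deterministic cubic NLS on $[0,T]$ and does not blow up. Concretely, one runs the standard cubic-NLS contraction in an $H^1$ Strichartz space on $[0,T]$ and absorbs the two perturbation terms via a H\"older pairing of $\|W\|_{L^\infty_t X}$ against Strichartz-dual norms of $v$, with the positive power of $T$ coming from time integration on the short interval. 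The $W^{1,12}$ target in the hypothesis on $\phi$ is tuned to the Strichartz geometry of the cubic nonlinearity in $\RR^3$.

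For the probabilistic step, the process $\{W(t)\}_{t \in [0,T]}$ is Gaussian with values in $X$ and $\EE\|W(t)\|_X^2 \lesssim_\phi t$ by the $\gamma$-radonifying hypothesis; its supremum has median of order $\sqrt T$, so by Borell's inequality for Gaussian measures (or Fernique),
\begin{equation*}
\PP(A_{\epsilon,T}^c) \le \exp\bigl(-c_\phi\, \epsilon^2 / T\bigr) \qquad \text{for } \epsilon \gg \sqrt T.
\end{equation*}
Taking $\epsilon = c(\|u_0\|_{H^1})\, T^\alpha$ and combining with the deterministic step yields
\begin{equation*}
\PP(u \text{ blows up in } [0,T]) \le \PP(A_{\epsilon,T}^c) \lesssim \exp(-c T^{2\alpha - 1}) = \exp(-c T^{-\beta})
\end{equation*}
with $\beta = 1 - 2\alpha$; any $\alpha \le 3/8$ attained in the deterministic step delivers $\beta \ge 1/4$.

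The main obstacle is quantifying the deterministic stability step. Since $W$ is only pathwise H\"older-$(1/2-)$ in time, the random coefficients $\nabla W$ and $|\nabla W|^2$ are rough in $t$ and the usual energy method for NLS with a variable first-order part is not available pathwise; one has to work entirely in Strichartz norms of $v$ and extract a strictly positive power of $T$ from time integration of $\|W\|_{L^\infty_t X}$. Without such a gain the Gaussian exponent $\epsilon^2 / T$ would not diverge as $T \to 0$, so squeezing out $\alpha > 0$ (and in particular as large as $3/8$) from the Strichartz perturbation analysis is the heart of the proof.
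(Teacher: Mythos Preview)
Your strategy is genuinely different from the paper's. The paper never gauges away the noise: it works with the It\^o formulation and the truncated solution $u_R$, isolates the stochastic convolution $Ju_R=\int_0^t S(t-s)(u_R\,dW_s)$, shows by a deterministic Strichartz lemma that $\{\|u_R\|_{X^1([0,T])}\ge R\}\subset\{\|Ju_R\|_{X^1([0,T])}>R/2\}$, and then bounds $\PP(\|Ju_R\|_{X^1}>R/2)$ by the $\rho$-dependent BDG inequality combined with Chebyshev, optimizing $\rho\sim T^{-1/4}$. The only ``perturbation'' that appears is the zeroth-order map $u\mapsto u\,dW$, which carries no spatial derivative; this is precisely what your gauge transformation sacrifices.

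Your Doss--Sussmann route has a structural gap that your last paragraph misdiagnoses. After $v=e^{iW}u$ you acquire the first-order term $-2i\nabla W\cdot\nabla v$ (and a $\Delta W$ term that already asks for one more spatial derivative on $W$ than the stated hypothesis $\phi\in R(L^2;W^{1,12})$ supplies). To close a contraction in $X^1([0,T])=L^\infty_tH^1_x\cap L^8_tW^{1,12/5}_x$ via Duhamel you must place $\nabla W\cdot\nabla v$ in a dual Strichartz space at the $H^1$ level, i.e.\ control $\nabla(\nabla W\cdot\nabla v)$; this produces $\nabla W\cdot\nabla^2 v$, one derivative beyond what the iteration norm gives. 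No positive power of $T$ from time integration repairs this --- it is a spatial regularity loss, not a smallness issue. The obstacle you name, the H\"older-$(1/2-)$ time regularity of $W$, is in fact harmless: after the gauge the equation for $v$ is an honest PDE with $t$-continuous coefficients, and the $H^1$ energy identity involves only $W(t)$, never $\partial_t W$. The actual fix is either to assume enough extra spatial smoothness on $\phi$ and run an energy/Gr\"onwall argument, or to invoke variable-coefficient Strichartz for the conjugated operator $e^{iW}\Delta e^{-iW}$ \`a la Barbu--R\"ockner--Zhang; neither is the ``H\"older pairing in Strichartz-dual norms'' you propose. Incidentally, if that loss is handled, your deterministic step should go through for $\epsilon$ equal to a fixed small constant (no $T^\alpha$ needed), and Borell then yields $\PP(\sup_{[0,T]}\|W\|>\epsilon)\le e^{-c/T}$, i.e.\ $\beta=1$, strictly better than the paper's $\beta=1/4$; your insistence on $\alpha=3/8$ is unmotivated within your own scheme.
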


\begin{rem}
  Our universal constant $\beta=\frac{1}{4}$ depends heavily on estimate \eqref{eq:BDG}. It is not expected to be optimal.  See remark \ref{rem:alphabeta} below for possible improvements.  We also note it remains a very interesting problem to obtain a (large deviation type) lower bound.
\end{rem}

\section{Background}
  (Deterministic) NLS is one of the most typical nonlinear dispersive equations, which arises naturally in quantum physics, fluid dynamics, etc. We refer to \cite{tao2006nonlinear},\cite{cazenave2003semilinear} for more background. One may, in general, consider 
  \begin{equation}\label{eq: nlsmain}
    \begin{cases}
    i\partial_t u- \Delta u=\mu |u|^{p-1}u,\\
    u(0,x)=u_{0}
    \end{cases}
  \end{equation}
  When $\mu$ is $+1$, it is called focusing,  when $\mu$ is $-1$, it is called defocusing. It is called mass critical if $p=1+\frac{4}{d} $ and energy critical if $p =1+ \frac{4}{d-2}, d\ge 3$, here we use $d$ to denote dimension. It is well known that $H^{1}$ initial data gives rise to a local solution in the energy critical and energy subcritical setting, \cite{Velo1979Cauchy}, \cite{tao2006nonlinear}, \cite{cazenave2003semilinear}.

  When \eqref{eq: nlsmain} is mass supercritical ($p\geq 1+\frac{4}{d}$) and focusing, one can prove the existence of finite time blow up solutions by Virial arguments, \cite{Glassey1977blowup}.
  Blow-up dynamic is an active research field, for both obstacle arguments and constructive arguments. It is impossible to survey the whole field here, but we would like to mention the recent breakthrough by F. Merle, P. Rapha{\"e}l, I. Rodnianski and J. Szeftel on energy supercritical defocusing case \cite{Merle2022supercritical}, and refer to the reference therein for related works.

  It is natural to extend the study of NLS to the stochastic setting if one believes that the real world is a bit noisy. We refer to \cite{dD1999L2} ,\cite{PhysRevE.49.4627}, \cite{BCRG1995White} for more background. We will focus on the multiplicative noise case.
  
  Local theory has been well understood now for noise which is colored in space and white in time, we refer to \cite{dD1999L2}, \cite{dD2003H1}, \cite{BRZ2014Rescaling}, \cite{BRZ2016stochastic} for subcritical local well-posedness (lwp), and we refer to \cite{FanXu2021critical}, \cite{Zhang2023critical}, \cite{Hornung2018NLS} for critical lwp. We also refer to \cite{FanXuZhao2023Long}, \cite{Fan2024Long}, \cite{Herr2019Scattering} for scattering dynamics. 

  It is a very interesting question to understand how noise can interact with blow up phenomena. There is a folklore in the community that regular noise will accelerate the blow up and space time white noise, we refer to \cite{Millet2020Behavior_colored}, \cite{Millet2021Behavior_white}, \cite{DD2002Numerical_resolution}, \cite{DDD2001theoretical_numerical} for numerical investigations.

  The impact of white noise on blow up is hard to study since even lwp is unclear for the multiplicative noise case. For a regular noise which is colored in space and white in time, A. de Bouard and A. Debussche have generalized the Virial identity to the stochastic setting, \cite{dD2005Blowup}. More importantly, they proved that, in contrast to the determinitic case, solutions to stochastic NLS can blow up in any short time with positive probability. This has been understood as the strong evidence that regular noise accelerated the blow up. We would also like to mention the work \cite{BRZ2017noblowup}, which proves that blow-up can be prevented by non-conservative noises in certain asymptotic sense\footnote{Compared to \cite{dD2005Blowup}, the work \cite{BRZ2017noblowup} considered a family of noise rather than a single noise, and proved that in the limit case, the noise is prevented. }.
  
The current work is motivated by \cite{dD2005Blowup}.  We prove that, for 3d cubic NLS\footnote{It is one of the most typical mass supercrtical and energy subcritical models} with a multiplicative noise, though by the work of \cite{dD2005Blowup}, blow up can happen in arbitrary short time with positive probability under certain non-degenerate condition,  such probability is indeed quite small, and we derive an large deviation upper bound. We remark that our work is of stability argument nature, thus no non-degenerate condition will be needed and one can extend our work easily to other energy subcritical models.
  
We end this subsection by mentioning that there have also been many works on blow-up of stochastic NLS from a constructive perspective, see \cite{SuZhang2023minimalblowup}, \cite{SuZhang2025multi-bubble}, \cite{FSZ2022loglog}.
  
\subsection{More technical background}
  The SNLS equation \eqref{eq:SNLS} can be understood as its equivalent $\mathrm{It\hat{o}}$ form
\begin{equation}\label{eq:SNLS_Ito}
    \left\{
    \begin{array}{ll}
    i\partial_t du-\Delta u =|u|^2 u +u \cdot \dot{W} -\frac{i}{2} u F_\phi, \\
    u(0,x)=u_0(x)\in H^1(\mathbb{R}^3).
    \end{array}
    \right.
\end{equation}
where $F_\phi(x)=\sum_{k=0}^\infty (\phi e_k(x))^2$ is the $\mathrm{It\hat{o}}$-Stratonovich correction and $u \cdot \dot{W}$ is the usual $\mathrm{It\hat{o}}$ product. Its $H^1$ (mild) solution should be understood in the integral formulation 
\begin{equation}\label{eq:SNLS_solution}
  u(t)=S(t)u_0- i\int_0^\tau S(t-s)|u|^2u(s) ds- i\int_0^\tau S(t-s) (u(s) dW_s)- \frac{1}{2}\int_0^\tau S(t-s)(u(s)F_\phi) ds.  
\end{equation}
where $S(t)=e^{-it\Delta}$ is the linear evolution operator of free Schr{\"o}dinger equation and $\tau$ is a stopping time. In some sense, local theory starts from proving $\tau>0$ almost surely. We refer the reader \cite{DaPrato_Zabczyk2014StochInfinite} for further general discussion on stochastic analysis and stochastic PDE.

For stochastic NLS equation \eqref{eq:SNLS}, a local wellposedness theory was developed in \cite{dD2003H1}, \cite{DDD2001theoretical_numerical}, and it shows that there exists a unique maximal stopping time (depending on $u_0$) $\tau^*(u_0,\omega)$ and a unique 
$u(t,x,\omega) \in C_t H_x^1 ([0,\tau) \times \RR^3)$ almost surely such that $(u,\tau^*(u_0))$ is a local $H^1$ solution of SNLS. Moreover, one has the following blow-up alternative
\begin{equation}
  \tau^*(u_0,\omega)=+\infty \text{\, or \,} \lim_{t\rightarrow (\tau^*(u_0,\omega))^-} \|u(t)\|_{H^1(\RR^3)}= +\infty. \text{\, a.s.}
\end{equation}
We will refer $(u,\tau^*(u_0))$ as the $H^1$ solution of SNLS \eqref{eq:SNLS} and $\tau^*$ as its blow-up time.

In this note, we consider real-valued noise, which is called the conservative case because the mass of SNLS $M(u)=\int_{\mathbb{R}^3} |u|^2 dx$ remains conserved here. One may get from \cite{dD2003H1} and this note the same result for complex-valued noise as well. 

\subsection{Notations}\label{section:notations}
  We use $A\lesssim B$ if there is a constant $C$ such that $A\le CB$. We use $A\lesssim_{m} B$ when the constant $C=C_{m}$ depends on some parameter $m$. We say $A\sim B$ if $A\lesssim B$ and $B\lesssim A$. Constant $C$ in the article may change line by line.

  We use the usual Lebesgue spaces $L^{p}$, and Sobolev space $W^{m,p}$, $H^{m}$.
  
  Given two Banach spaces $E,F$, we use $L(E;F)$ to denote the set of bounded linear operators from $E$ to $F$. 
  Given a Hilbert space $H$, and a Banach space $F$, we use $R(H,F)$ to denote the set of $\gamma$-radonifying operator from $H$ to $F$.

\section*{Funding Information and conflict of interest}
This work has been partial supported by NSFC Grant 12471232, 12288201, CAS Project for Young Scientists in Basic Research YSBR-031.

We declare that we have no conflict of interest in this work.

\section*{Acknowledgement}
We thank Prof. Yanqi Qiu for helpful discussion on UMD space. We thank Prof. Yi Huang for pointing out the reference \cite{Pinelis1994Optimum}. 

\section{Preliminaries}
\subsection{Stochastic preliminary}

We refer \cite{BZ1997OnStochastic} for the definition of  $\gamma$-radonifying operator. We simply recall the ideal property of $\gamma$-radonifying operator here. If $H$ is a Hilbert space, $F,B$ are Banach spaces and $T\in L(F,B), K\in R(H,F) $, then $TK \in R(H,B)$ with the following holds true
\begin{equation}\label{eq:ideal}
  \|TK \|_{R(H,B)}\le \|T\|_{L(F,B)} \|K\|_{R(H,F)}.
\end{equation}

We also mention a fact that if $F$ is a Hilbert space as well, then $R(H,F)$ coincides with the set of Hilbert-Schmidt operators from $H$ to $F$.

One key ingredient is the Burkholder-Davis-Gundy(BDG) inequaility with a characterization of the dependence of the constant on $\rho$, which is summarized from \cite{BZ1997OnStochastic} as follows

\begin{lem}[BDG inequaility]\label{lem:BDG}
  If $X$ is an M-type 2 Banach space and $A(s)$ is a $R(H^1 \cap W^{1,12}(\RR^3);X)$-valued adapted square-integrable function, $T > 0$, we have the following estimate for a $X$-valued stochastic integral
  \begin{equation}
  \EE(\sup_{0\le t_0\le T}\|\int_0^{t_0} A(s) dW_s \|^\rho_{X})
  \le  C_{\rho}(X) \EE(\int_0^T \|A\circ \phi\|^2_{R(L^2(\RR^3);X)}ds)^{\frac{\rho}{2}}  
  \end{equation}
  Here $C_{\rho}(X)$ is a constant depending on $\rho, X$, with the estimate
  \begin{equation}\label{eq:Crho}
  C_{\rho}(X)\le (\frac{\rho}{\rho -1} \beta_\rho(X) K_{2,\rho}(X))^{\rho}
  \end{equation}
  where, $\beta_\rho(X)$ is the $\mathrm{UMD}_\rho$ constant of $X$ and $K_{2,\rho}(X)$ is the best constant in Khintchine inequality in space $X$. In particular, for $X=L^{12/5}(\RR^3)$ (indeed any $L^p(\RR^3)$, $p$ fixed ), we have
  \begin{equation}
    C_\rho(X) \lesssim \rho^{3\rho/2}
  \end{equation}
  for $\rho$ large.
\end{lem}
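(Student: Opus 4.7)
The proof plan is to invoke the abstract vector-valued BDG inequality for M-type 2 UMD Banach spaces from \cite{BZ1997OnStochastic} and then to track all constants explicitly in $\rho$ in the specialization to $X = L^p(\RR^3)$.

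First, for a general M-type 2 UMD space $X$ one proceeds as in \cite{BZ1997OnStochastic}: discretize the $X$-valued stochastic integral in time, apply Doob's $L^\rho$ maximal inequality (introducing the factor $\rho/(\rho-1)$), decouple the increments through the UMD$_\rho$ property (introducing the factor $\beta_\rho(X)$), and rewrite the decoupled square function as an $R(L^2;X)$-norm of $A\phi$ via Khintchine's inequality in $X$ (introducing the factor $K_{2,\rho}(X)$). Taking $\rho$-th powers of the one-step estimate and multiplying these three factors yields \eqref{eq:Crho}.

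Second, I would specialize to $X = L^p(\RR^3)$ with $p$ fixed (for instance $p = 12/5 > 2$, so $L^p$ is indeed M-type 2). For the Khintchine constant, the $p$-moment identity $\|\sum \epsilon_n f_n\|_{L^p(\Omega;L^p)} \asymp \|(\sum |f_n|^2)^{1/2}\|_{L^p}$ (a consequence of Fubini and the scalar Khintchine inequality) together with Kahane's hypercontractive equivalence of moments gives $K_{2,\rho}(L^p) \lesssim \sqrt{\rho}$. For the UMD constant, Burkholder's sharp scalar estimate is $\beta_\rho(\RR) \lesssim \rho$, and since $L^p$ is UMD (Bourgain) with UMD$_\rho$ constant of the same order $O(\rho)$ for $p$ fixed, one obtains $\beta_\rho(L^p) \lesssim \rho$. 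Plugging these bounds into \eqref{eq:Crho} and using $\rho/(\rho-1) \to 1$ as $\rho \to \infty$ gives
\[
C_\rho(L^p) \lesssim (\rho \cdot \sqrt{\rho})^\rho = \rho^{3\rho/2},
\]
as claimed.

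The main technical hurdle is the $O(\rho)$ bound on $\beta_\rho(L^p)$ when $\rho \gg p$, since a naive Fubini reduction from the scalar Burkholder bound reverses the direction required by Minkowski's integral inequality. This is remedied by working directly with $L^p(\RR^3)$-valued martingale differences and invoking Burkholder's square-function inequality together with the type 2 / cotype 2 properties available for $L^p$ with $p > 2$, which yield the required polynomial growth of $\beta_\rho(L^p)$ for fixed $p$. Once this is in hand, the specialization to $\rho^{3\rho/2}$ is immediate from the product structure of $C_\rho(X)$.
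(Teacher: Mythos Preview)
Your overall strategy matches the paper's: the abstract bound \eqref{eq:Crho} is simply cited from \cite{BZ1997OnStochastic} (equation (2.25) and the remark after (2.33) there), and the specialization to $L^{12/5}$ reduces to bounding $K_{2,\rho}$ and $\beta_\rho$ separately and multiplying.

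The differences are in how the two constants are handled. For the Khintchine constant, the paper argues more elementarily than you do: for large $\rho$ one has, by Minkowski's inequality (swapping $L^\rho_\omega$ and $L^p_x$ when $\rho\ge p$), $K_{2,\rho}(L^p)\le K_{2,\rho}(\RR)$, and then quotes Haagerup's sharp scalar bound \cite{Haagerup1981Khintchine} to get $K_{2,\rho}(\RR)\lesssim\sqrt{\rho}$. Your Kahane--Khintchine route also works but is heavier than needed. For the UMD constant, the paper avoids your ``technical hurdle'' entirely by invoking Pisier's extrapolation (\cite{Pisier2016Martingales}, Thm~5.13): for any UMD space $X$ and fixed $q$, $\beta_\rho(X)\le \alpha(\rho,q)\,\beta_q(X)$ with $\alpha(\rho,q)=O(\rho)$ as $\rho\to\infty$. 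This gives $\beta_\rho(L^p)\lesssim\rho$ in one line, with no need for square-function or type/cotype arguments.

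One slip to correct: you write ``type 2 / cotype 2 properties available for $L^p$ with $p>2$''. For $p>2$ (here $p=12/5>2$), $L^p$ has type $2$ but cotype $p$, not cotype $2$. If your fallback argument genuinely relied on cotype $2$, it would fail; fortunately the desired bound $\beta_\rho(L^p)=O(\rho)$ is still true and follows cleanly from the Pisier extrapolation above, so your conclusion survives even though the stated justification does not.
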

   Estimate \eqref{eq:Crho} follows from \cite{BZ1997OnStochastic} equation (2.25) and the line after (2.33), see also\footnote{There is a slight typo in \cite{BZ1997OnStochastic} Note 1 that it should be $C_p(X)\le \beta_p^p(X) K^p_{2,p}$} Note 1 there. In this note we mainly use BDG inequality with $X=L^{12/5}(\RR^3)$ and $A(s) =S(t-s)(u(s) \, \cdot)$(from \cite{Pisier2016Martingales} Thm 5.22 we know that $L^{12/5}(\RR^3)$ is a type-2 UMD space). It remains to estimate $K_{2,\rho}(X)$ and $\beta_\rho(X)$. One may get from Minkowski inequaility that for $\rho$ large, (we will only need large $\rho$ in our application)
  \begin{equation}
    K_{2,\rho}(X)\le K_{2,\rho}(\RR),
  \end{equation}
  and use the main result of \cite{Haagerup1981Khintchine} 
  \begin{equation}
    K_{2,\rho}(\RR) = 2^{1/2}\pi^{-\frac{1}{2\rho}}(\Gamma(\frac{\rho+1}{2}))^{1/\rho} \lesssim \rho^{1/2},
  \end{equation}
  to get 
  \begin{equation}\label{eq:K_2_rho}
    K_{2,\rho}(X)\lesssim \rho^{1/2}.
  \end{equation}
  From (\cite{Pisier2016Martingales}, Thm 5.13) one knows that 
  \begin{equation}
    \beta_{\rho}(X) \le \alpha(\rho, q) \beta_{q} (X).
  \end{equation}
  with $\alpha= O(p)$ as $p\to \infty$ and $q$ fixed. As a consequence, we get
  \begin{equation}\label{eq:beta_rho}
    \beta_\rho(X)\lesssim \rho
  \end{equation}
  for all $\rho$ large enough. We end this section by noting that we will mainly use the following two estimates, which follows directly from Lemma \ref{lem:BDG}
  \begin{equation}\label{eq:BDG}
  \EE(\sup_{0\le t_0\le T}\|\int_0^{t_0} S(t-s)(u(s) dW_s)\|^\rho_{L^{12/5}})
  \lesssim  \rho^{3\rho/2} \EE(\int_0^T ||S(t-s)(u(s)\phi \, \cdot)||^2_{R(L^2;L^{12/5})}ds)^{\frac{\rho}{2}}
  \end{equation}
  and a direct corollary of it
  \begin{equation}\label{eq:BDG2}
  \EE(\|\int_0^T S(t-s)(u(s) dW_s)\|^\rho_{L^{12/5}})
  \lesssim  \rho^{3\rho/2} \EE(\int_0^T ||S(t-s)(u(s)\phi \, \cdot)||^2_{R(L^2;L^{12/5})}ds)^{\frac{\rho}{2}}.
  \end{equation}

\subsection{Dispersive preliminary}
  We introduce standard dispersive and Strichartz estimates and refer \cite{keel1998endpoint}, \cite{tao2006nonlinear} for details. We say $(q,r)$ is an admissible Strichartz pair in $\RR^3$ if $q,r\in [2,+\infty]$ and they satisfy
  \begin{equation}
    \frac{2}{q}=3(\frac{1}{2}-\frac{1}{r})
  \end{equation}
  We denote conjugate of $p$ as $p'$, which satisfies $\frac{1}{p}+\frac{1}{p'}=1$.
  \begin{lem}[dispersive estimate]\label{lem:dispersive}
    For $p\ge 2$, we have
    \begin{equation}
      \|S(t) f\|_{L^p_x}\lesssim t^{-d(\frac{1}{2}-\frac{1}{p})} \|f\|_{L^{p'}_x}
    \end{equation}
    for any $f(x) \in L^{p'}(\RR^d)$.
  \end{lem}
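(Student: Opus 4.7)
The plan is to reduce the claim to the two endpoint cases $p=2$ and $p=\infty$ and then interpolate via Riesz--Thorin. This is completely classical, so I would aim for an efficient writeup rather than anything clever.

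First I would handle the $p=\infty$ endpoint. For $t\neq 0$, the Schr\"odinger propagator has the explicit convolution kernel
\begin{equation*}
  S(t)f(x) = \frac{1}{(4\pi i t)^{d/2}}\int_{\RR^d} e^{i|x-y|^2/(4t)} f(y)\, dy,
\end{equation*}
which follows from computing $\widehat{S(t)f}(\xi)=e^{it|\xi|^2}\hat f(\xi)$ and inverting the Fourier transform via a standard Gaussian regularization argument. Taking the supremum in $x$ and bounding the exponential by $1$ immediately yields
\begin{equation*}
  \|S(t)f\|_{L^\infty_x} \le (4\pi |t|)^{-d/2}\|f\|_{L^1_x},
\end{equation*}
which is the dispersive estimate at $p=\infty$, $p'=1$.

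Next I would record the trivial $p=2$ endpoint: since the Fourier multiplier $e^{it|\xi|^2}$ has modulus one, Plancherel gives $\|S(t)f\|_{L^2_x}=\|f\|_{L^2_x}$, which is the claimed bound at $p=p'=2$ (the factor $t^{-d(1/2-1/p)}$ degenerates to $1$).

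Finally, for $p\in(2,\infty)$ I would interpolate between these two endpoints. For fixed $t\neq 0$, view $S(t)$ as a linear map and apply the Riesz--Thorin theorem with interpolation parameter $\theta\in(0,1)$ chosen so that $\frac1p=\frac{1-\theta}{2}+\frac{\theta}{\infty}$, i.e.\ $\theta=1-\frac{2}{p}$. The dual exponent automatically satisfies $\frac{1}{p'}=\frac{1-\theta}{2}+\theta$, and the interpolated operator norm bound is
\begin{equation*}
  \|S(t)\|_{L^{p'}\to L^p} \le 1^{1-\theta}\cdot\bigl((4\pi|t|)^{-d/2}\bigr)^{\theta} \lesssim |t|^{-d(1/2-1/p)},
\end{equation*}
which is exactly the desired bound. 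There is essentially no hard step here; the only care-needed point is the justification of the explicit kernel formula for $S(t)$, which requires either a Gaussian regularization or interpreting the integral as an oscillatory integral in the distributional sense, both of which are textbook.
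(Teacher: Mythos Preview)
Your argument is correct and is precisely the classical proof. The paper does not actually prove this lemma at all; it is stated as a standard preliminary with a reference to \cite{keel1998endpoint} and \cite{tao2006nonlinear}, and the argument you have written (explicit kernel for the $L^1\to L^\infty$ bound, Plancherel for $L^2\to L^2$, then Riesz--Thorin) is exactly the textbook proof those references contain.
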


  \begin{lem}[Strichartz estimate in $\RR^3$]\label{lem:Strichartz}
    For $(q,r)$ an admissible Strichartz pair in $\RR^3$, we have
    \begin{equation}
      \|S(t) f\|_{L_t^q L_x^r(\RR \times \RR^3)} \lesssim \|f \|_{L^2_x(\RR^3)}
    \end{equation}
    for any $f(x) \in L^2_x(\RR^3)$. For any two admissible Strichartz pairs $(q_1,r_1)$,$(q_2,r_2)$ we have
    \begin{equation}
      \|\int_{0}^{t}S(t-s)F(s) ds\|_{L^{q_1}_t L_x^{r_1}(\RR \times \RR^3)} \lesssim \|F\|_{L^{q'_2}_t L_x^{r'_2}(\RR \times \RR^3)}
    \end{equation}
    for any $F(t,x) \in L^{q'_2}_t L_x^{r'_2}(\RR \times \RR^3)$.
  \end{lem}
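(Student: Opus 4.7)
The plan is to derive both bounds from the dispersive estimate of Lemma \ref{lem:dispersive} by the $TT^*$ method of Keel--Tao. First, I would interpolate the dispersive estimate with the $L^2$-isometry $\|S(t)f\|_{L^2_x}=\|f\|_{L^2_x}$ (conservation of the free evolution) to obtain the family of decay bounds
\begin{equation*}
\|S(t)f\|_{L^r_x}\lesssim |t|^{-3(1/2-1/r)}\|f\|_{L^{r'}_x},\qquad 2\le r\le\infty.
\end{equation*}
Using the group law $S(t)S(-s)=S(t-s)$ together with duality, the homogeneous estimate is equivalent to the untruncated bilinear inequality
\begin{equation*}
\Bigl|\iint\langle S(t-s)F(s),G(t)\rangle_{L^2_x}\,dt\,ds\Bigr|\lesssim \|F\|_{L^{q'}_tL^{r'}_x}\|G\|_{L^{q'}_tL^{r'}_x},
\end{equation*}
which is the $TT^*$ reformulation.

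Second, I would handle all non-sharp admissible pairs (that is, $q>2$) by combining the dispersive bound above with the one-dimensional Hardy--Littlewood--Sobolev inequality applied to the convolution kernel $|t-s|^{-3(1/2-1/r)}=|t-s|^{-2/q}$; the exponent matches since $\frac{1}{q'}-\frac{1}{q}=1-\frac{2}{q}$. This yields the homogeneous estimate for all admissible $(q,r)$ with $q>2$. The inhomogeneous estimate with two distinct admissible pairs $(q_1,r_1),(q_2,r_2)$, both with $q_i>2$, then follows from the Christ--Kiselev lemma, which upgrades the untruncated bilinear bound to the retarded one $\int_0^t S(t-s)F(s)\,ds$, since $\min(q_1,q_2')>2>q_2'$ ensures the required strict inequality between the time exponents needed by Christ--Kiselev.

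Third, I would treat the endpoint $(q,r)=(2,6)$ in $\mathbb{R}^3$ by the bilinear interpolation argument of Keel--Tao: decompose the time separation dyadically into $|t-s|\sim 2^j$, bound each dyadic piece with the dispersive estimate at $L^{6/5}\to L^6$ against the trivial $L^2\to L^2$ bound, interpolate bilinearly against atomic decompositions of $F,G$ to pick up an $\epsilon$-power gain in $j$, and sum the resulting geometric series. At the endpoint the direct Christ--Kiselev step fails, but the same dyadic bilinear scheme delivers the two-pair inhomogeneous estimate (including mixed pairings of the endpoint with any other admissible pair) in one stroke.

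The main obstacle is the endpoint pair $(2,6)$: the kernel $|t-s|^{-1}$ sits precisely at the failure point of Hardy--Littlewood--Sobolev, so the straightforward fractional-integration approach collapses and one must invoke the Keel--Tao dyadic bilinear machinery. Since the statement is entirely standard in $\mathbb{R}^3$, in practice I would simply cite \cite{keel1998endpoint} and \cite{tao2006nonlinear} for the execution of all three steps and note that every admissible pair, including the endpoint, is covered.
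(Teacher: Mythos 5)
Your proposal is correct and takes essentially the same route as the paper: the paper gives no proof of this lemma at all, simply citing \cite{keel1998endpoint} and \cite{tao2006nonlinear} as standard references, and your sketch (interpolated dispersive bound plus the $TT^*$/Hardy--Littlewood--Sobolev argument for $q>2$, Christ--Kiselev for the retarded integral, and the Keel--Tao dyadic bilinear machinery at the endpoint $(2,6)$) is precisely the argument those references execute. One cosmetic slip worth fixing: the condition you write, $\min(q_1,q_2')>2>q_2'$, is self-contradictory as stated; the Christ--Kiselev hypothesis is simply $q_2'<q_1$, which in your non-endpoint case follows from $q_2'<2<q_1$.
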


  We end this section by introducing
  \begin{equation}
    X^1_1([0,t]):=L_t^\infty H_x^1([0,t]\times\RR^3 ); \qquad X^1_2([0,t]):= L_t^8 W_x^{1,12/5}([0,t] \times \RR^3)
  \end{equation}
  and 
  \begin{equation}
    X^1([0,t]):=X^1_1([0,t]) \cap X^1_2([0,t])
  \end{equation}
  in the sense that $\|\cdot \|_{X^1([0,t])}=\|\cdot \|_{X^1_1([0,t])}+\|\cdot \|_{X^1_2([0,t])}$.
\section{Overview}
\subsection{Truncated SNLS Equation}\label{sec:truncated}
An auxiliary truncated equation is useful when considering SPDE that may blow up. Let $\theta: [0,\infty)\rightarrow \RR^+$ be a smooth cut-off function that
\begin{eqnarray}
    \left\{
      \begin{array}{ll}
      \theta(x) = 1 \text{ on } [0,1],\\
      \theta(x) = 0 \text{ on } [2,\infty),\\
      \theta(x) \text{ is monotonically decreasing. }  
      \end{array}
    \right.
\end{eqnarray}
We introduce the following truncated SNLS
  \begin{eqnarray}\label{eq:truncated_SNLS}
    \left\{
    \begin{array}{ll}
    i\partial_t u_R-\Delta u_R =\theta(\frac{\|u_R\|_{X^1([0,t])}}{R}) |u_R|^2 u_R +u_R\circ \dot{W}, \\
    u_R(0,x)=u_0(x).
    \end{array}
    \right.
  \end{eqnarray}
and recall $X^1([0,t])=L_t^\infty H_x^1\cap L_t^8 W_x^{1,12/5}([0,t])$

Equation \eqref{eq:truncated_SNLS} has been studied in \cite{dD2003H1}, and has been shown to be globally wellposed. It plays an important role in the study of local wellposedness. More precisely, for any $u_0\in H^1(\RR^3)$ and any $T>0$, there exists a unique $u_R\in L^8_\omega X^1([0,T])$ such that 
\begin{equation}\label{eq:truncated_SNLS_solution}
  u_R(t)=S(t)u_0- i\int_0^t S(t-s)\theta(\frac{\|u_R\|_{X^1([0,t])}}{R}) |u_R|^2u_R(s) ds- i\int_0^t S(t-s) ( u_R(s) dW_s)- \frac{1}{2}\int_0^t S(t-s)(u_R(s)F_\phi) ds.  
\end{equation}
And if one denotes
\begin{equation}
  \tau_R:=\inf \{t>0 | \|u_R\|_{X^1([0,t])}\ge R \},
\end{equation}
one has that $\tau_R$ is increasing with $R$. If $R'>R$, $u_{R'}$ coincides with $u_R$ on $[0,\tau_R]$. Furthermore, we have blow-up time $\tau^*=\lim_{R\to \infty} \tau_R$. And $u := \lim_{R\to \infty} u_R$ is well-defined for time $\tau < \tau^*$ and coincides with $u_R$ for $\tau \in [0,\tau_R]$. See \cite{dD2003H1} and \cite{dD1999L2} for details.

\subsection{Overview of the proof of Theorem \ref{thm:main}}
Our main theorem \ref{thm:main} follows from
\begin{prop}\label{prop:main}
  There exists a $R(\|u_0\|_{H^1})>0$, $T_0(\|u_0\|_{H^1})>0$, such that 
  \begin{equation}
    \ln \mathbb{P}(\|u_{R}\|_{X^1([0,T])}\ge R)\lesssim -T^{-1/4}
  \end{equation}
  for any $0<T\le T_0$.
\end{prop}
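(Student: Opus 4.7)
The plan is to introduce a stopping time, run a deterministic bootstrap that isolates the stochastic integral, control its moments via the BDG inequality of Lemma \ref{lem:BDG}, and then optimize in the moment exponent via Chebyshev. Concretely, set $\sigma := \inf\{t\ge 0 : \|u_R\|_{X^1([0,t])}\ge R\}$, so that $\{\|u_R\|_{X^1([0,T])}\ge R\}=\{\sigma\le T\}$ and it suffices to bound $\PP(\sigma\le T)$. On $[0,\sigma\wedge T]$ the truncation factor $\theta$ is identically $1$, and the Strichartz estimate (Lemma \ref{lem:Strichartz}) applied to \eqref{eq:truncated_SNLS_solution} yields
\begin{equation*}
\|u_R\|_{X^1([0,\sigma\wedge T])} \;\le\; C\|u_0\|_{H^1} + C\,T^{1/2}\,\|u_R\|_{X^1}^{3} + C\,T\,\|F_\phi\|_{W^{1,\infty}}\,\|u_R\|_{X^1_1} + C\,\|\III(u_R)\|_{X^1([0,\sigma\wedge T])},
\end{equation*}
where $\III(u_R)(t):=\int_0^t S(t-s)(u_R\,dW_s)$. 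Choosing $R:=4C\|u_0\|_{H^1}$ and $T_0$ so small that the cubic and It\^o-correction terms are each at most $R/4$, on $\{\sigma\le T\}$ the left-hand side equals $R$ and we extract $\|\III(u_R)\|_{X^1([0,\sigma])}\ge c_0\,R$ for a universal $c_0>0$.

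The heart of the argument is the moment bound
\begin{equation*}
\EE\,\|\III(u_R)\|_{X^1([0,T\wedge\sigma])}^{\rho} \;\lesssim_{\phi}\; \rho^{3\rho/2}\;T^{3\rho/8}\;R^{\rho}
\end{equation*}
for $\rho$ large, after freezing $u_R$ at time $\sigma$ so that $\|u_R\|_{X^1}\le R$ throughout the integration. The $L_t^{\infty}H_x^1$ component uses BDG with the Hilbert target $X=H^1$, which only generates the standard $\rho^{\rho/2}$ constant and a non-dominant bound. The $L_t^{8}W_x^{1,12/5}$ component is the source of the exponent $3/8$: combine \eqref{eq:BDG} with the dispersive decay $\|S(t-s)f\|_{L^{12/5}}\lesssim (t-s)^{-1/4}\|f\|_{L^{12/7}}$ of Lemma \ref{lem:dispersive} and the ideal property of $\gamma$-radonifying operators, together with the product estimate $\|u_R\,\phi\|_{R(L^2;L^{12/7})}\lesssim \|u_R\|_{H^1}\,\|\phi\|_{R(L^2;W^{1,12})}$ (via H\"older and Sobolev in $\RR^3$), to obtain
\begin{equation*}
\EE\,\|\III(u_R)(t)\|_{L^{12/5}}^{\rho} \;\lesssim\; \rho^{3\rho/2}\Bigl(\int_0^t (t-s)^{-1/2}\,\|u_R(s)\|_{H^1}^{2}\,ds\Bigr)^{\rho/2} \;\lesssim_\phi\; \rho^{3\rho/2}\,t^{\rho/4}\,R^{\rho}.
\end{equation*}
Integrating in $t\in [0,T]$ and passing from $L_t^{\rho}$ to $L_t^{8}$ by H\"older costs an extra $T^{\rho/8}$, giving $T^{3\rho/8}$ in total. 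The gradient term $\nabla\III(u_R)$ is estimated in the same way, with $\nabla$ distributed through $u_R\,\phi e_k$ and $\nabla(\phi e_k)$ absorbed into the $R(L^2;W^{1,12})$-norm of $\phi$.

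Chebyshev then gives
\begin{equation*}
\PP(\sigma\le T) \;\le\; \PP\bigl(\|\III(u_R)\|_{X^1([0,T\wedge\sigma])}\ge c_0\,R\bigr) \;\lesssim_{\phi}\; C^{\rho}\,\rho^{3\rho/2}\,T^{3\rho/8},
\end{equation*}
and the critical choice $\rho\sim T^{-1/4}$ minimizes the right-hand side, yielding $\ln\PP(\sigma\le T)\lesssim -T^{-1/4}$, which is Proposition \ref{prop:main}. The main technical obstacle is the moment estimate just sketched: interfacing the UMD-type constant $\rho^{3\rho/2}$ from \eqref{eq:BDG} with the dispersive decay $(t-s)^{-1/4}$ of $S(t-s)$ on $L^{12/5}$, while carrying the $\gamma$-radonifying norm of the multiplier $u_R\,\phi$ and its gradient through the ideal property and a product rule, is precisely what produces the time exponent $3/8$ and hence the universal constant $\beta=\tfrac14$; a cruder estimate on either ingredient worsens $\beta$, and any improvement past $\tfrac14$ would require sharpening \eqref{eq:BDG} itself.
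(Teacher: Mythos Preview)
Your proposal is correct and follows essentially the same route as the paper: isolate the stochastic convolution via a deterministic Strichartz bootstrap (the paper's Lemma~\ref{lem:deterministic}), bound its $\rho$-th moment by combining the BDG constant $\rho^{3\rho/2}$ from \eqref{eq:BDG} with the $(t-s)^{-1/4}$ dispersive decay on $L^{12/5}$ to produce $\rho^{3\rho/2}T^{3\rho/8}R^{\rho}$, and then optimize $\rho\sim T^{-1/4}$ via Chebyshev. The only cosmetic difference is that you freeze $u_R$ at the stopping time $\sigma$ to obtain a pathwise bound $\|u_R\|_{X^1}\le R$ inside the stochastic integral, whereas the paper instead closes a contraction in $L^\rho_\omega X^1([0,T])$ to reach $\|u_R\|_{L^\rho_\omega X^1}<R$; the two devices are interchangeable here and yield the same exponent.
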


\begin{proof}[Proof of Thm \ref{thm:main} assuming Prop \ref{prop:main}]
  As we mentioned before in section \ref{sec:truncated}, 
  \begin{equation}
    \tau^* =\lim_{R\to \infty} \tau_R.
  \end{equation}
  If $\|u_{R}\|_{X^1([0,T])}< R$ then $\tau_R > T$, and by the monotonicity of $\tau_R$ we have $\tau^* > T$, so no blow-up occurs in $[0,T]$ in that case. In the view above, if $u$ blows up in $[0,T]$ one gets $\|u_{R}\|_{X^1([0,T])}\ge R$ and one has the following estimate
  \begin{equation}
    \ln \mathbb{P}(u \text{ blows up in } [0,T]) \le \ln \mathbb{P}(\|u_{R}\|_{X^1([0,T])}\ge R)\lesssim -T^{-1/4},
  \end{equation}
  which ends our proof with $\beta=\frac{1}{4}$.
\end{proof}

Let $R$ be fixed for the moment (but we will choose its value later). Note that $u_{R}$ is global and has a mild formulation \eqref{eq:truncated_SNLS_solution}. Let
\begin{equation}
  Ju_R:=\int_0^t S(t-s) (u_R(s) dW_s)
\end{equation}
then equation \eqref{eq:truncated_SNLS_solution} can be reduced into
\begin{equation}
  u_R(t)=S(t)u_0- i\int_0^t  S(t-s)\theta(\frac{\|u_R\|_{X^1([0,t])}}{R})|u_R|^2u_R(s) ds- iJu_R - \frac{1}{2}\int_0^t S(t-s)(u_R(s)F_\phi) ds.
\end{equation}
Proposition \ref{prop:main} follows from the following two lemmas
\begin{lem}\label{lem:deterministic}
  Let $R > R_0(\|u_0\|_{H^1})$ be fixed. For all $0< T\le c_1 R^{-4}$ with $c_1$ small enough, if $\|Ju_R\|_{X^1([0,T])} \le \frac{R}{2}$, then $\|u_R\|_{X^1([0,T])}< R$.
\end{lem}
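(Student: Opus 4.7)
The plan is to close a deterministic bootstrap on $M(T) := \|u_R\|_{X^1([0,T])}$ applied to the mild formulation \eqref{eq:truncated_SNLS_solution}, using Strichartz (Lemma~\ref{lem:Strichartz}). Since $u_R \in X^1([0,T])$ a.s.\ for every $T$, $M(\cdot)$ is continuous and non-decreasing in $T$. As long as $M(T) \le R$ on $[0,T]$, the cutoff $\theta(\|u_R\|_{X^1([0,s])}/R) \equiv 1$ for $s \in [0,T]$, so the truncated Duhamel coincides with the untruncated cubic SNLS on that interval.

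Set $T_* := \inf\{T>0 : M(T) \ge R\}$ and suppose for contradiction that $T_* \le c_1 R^{-4}$. By continuity $M(T_*)=R$, and on $[0,T_*]$ the equation reads
\[
u_R(t) = S(t)u_0 - i\int_0^t S(t-s)|u_R|^2 u_R\,ds - iJu_R(t) - \tfrac{1}{2}\int_0^t S(t-s)(u_R F_\phi)\,ds.
\]
Taking the $X^1([0,T_*])$-norm and invoking Strichartz term by term (the dual pair $(8/7,12/7)$ against the admissible $(8,12/5)$ for the cubic term, and the dual $(1,2)$ against $(\infty,2)$ for the It\^o correction) yields
\[
M(T_*) \le C\|u_0\|_{H^1} + C\bigl\|\,|u_R|^2 u_R\bigr\|_{L^{8/7}_t W^{1,12/7}_x} + \|Ju_R\|_{X^1([0,T_*])} + C\,T_* \|F_\phi\|_{W^{1,\infty}} M(T_*).
\]

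The decisive ingredient is the standard 3D cubic nonlinear estimate
\[
\bigl\|\,|u|^2 u\bigr\|_{L^{8/7}_t W^{1,12/7}_x([0,T])} \lesssim T^{1/2}\,\|u\|_{X^1([0,T])}^3,
\]
obtained from $|\nabla(|u|^2 u)| \lesssim |u|^2|\nabla u|$, H\"older in $x$ giving $\|u^2 \nabla u\|_{L^{12/7}_x} \le \|u\|_{L^{12}_x}^2 \|\nabla u\|_{L^{12/5}_x}$, the Sobolev embedding $W^{1,12/5}(\RR^3)\hookrightarrow L^{12}(\RR^3)$, and H\"older in time (each factor $\|u\|_{L^{12}_x}\in L^{8/3}_t$ loses $T^{1/4}$ relative to $L^8_t$, paired against $\|\nabla u\|_{L^{12/5}_x}\in L^8_t$). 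Invoking the hypothesis $\|Ju_R\|_{X^1([0,T_*])}\le R/2$ together with $T_* \le c_1 R^{-4}$ then gives
\[
R \le C\|u_0\|_{H^1} + Cc_1^{1/2} R + R/2 + Cc_1 R^{-3}\|F_\phi\|_{W^{1,\infty}}.
\]
Choosing $c_1$ small enough that $Cc_1^{1/2}\le 1/8$ and then $R_0(\|u_0\|_{H^1})$ large enough that $C\|u_0\|_{H^1}\le R_0/8$ (the It\^o-correction term is automatically $o(R)$) forces $R < R$, the desired contradiction.

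The main obstacle is that the scaling is razor-sharp: $T^{1/2}R^3 \sim R$ precisely when $T \sim R^{-4}$, so the bootstrap closes only through smallness of the constant $c_1$, with no surplus $T$-decay to spare. This rigid matching is exactly what pins down the exponent $1/4$ of $T$ in Proposition~\ref{prop:main}, and hence the value $\beta = 1/4$ in Theorem~\ref{thm:main}; improving it would require a fundamentally different treatment of either the Strichartz bookkeeping on the cubic term or the stochastic convolution $Ju_R$.
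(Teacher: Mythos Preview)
Your proof is correct and follows essentially the paper's approach: both estimate the mild formulation term by term via Strichartz and the cubic bound $\||u|^2u\|_{L^{8/7}_tW^{1,12/7}_x}\lesssim T^{1/2}\|u\|_{X^1}^3$, with the constraint $T\le c_1R^{-4}$ making $T^{1/2}R^3$ small relative to $R$. The only cosmetic differences are that the paper exploits the truncation $\theta$ directly (the nonlinear integrand vanishes unless $\|u_R\|_{X^1}\le 2R$, so no continuity argument to a time $T_*$ is needed), and the paper places $F_\phi$ in $W^{1,12}$ paired against $\|u\|_{W^{1,12/5}}$ (yielding $T^{7/8}$) rather than your $W^{1,\infty}$, which matches the stated $\gamma$-radonifying hypothesis on $\phi$ more directly.

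One correction to your closing commentary: the value $\beta=1/4$ is \emph{not} pinned down by the $T^{1/2}R^3\sim R$ balance in this lemma---that only fixes $T_0\sim R^{-4}$. The exponent $1/4$ comes from Lemma~\ref{lem:stochastic}, where the BDG constant $\rho^{3\rho/2}$ forces the choice $\rho\sim T^{-1/4}$ so that $\rho^{3/2}T^{3/8}\sim 1$; see Remark~\ref{rem:alphabeta}.
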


\begin{lem}\label{lem:stochastic}
  Let $R > R_0(\|u_0\|_{H^1})$ be fixed. For all $0< T\le c_2 R^{-4}$ with $c_2$ small enough, we have
  \begin{equation}
  \ln \mathbb{P}(\|Ju_R\|_{X^1([0,T])} > \frac{R}{2})\lesssim - T^{-1/4}
  \end{equation}
\end{lem}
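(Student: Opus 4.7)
The plan is to combine a stopping-time reduction (using the deterministic Lemma~\ref{lem:deterministic} to extract an a priori bound on $\|u_R(s)\|_{H^1}$ inside the stochastic integrand) with a Markov $+$ BDG moment estimate, and then optimize the moment exponent $\rho$ to produce the tail $\exp(-cT^{-1/4})$.

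To avoid a circular dependence on the random quantity $\|u_R\|_{X^1}$, I would introduce $\tau_{*}:=\inf\{t\in[0,T]:\|u_R\|_{X^1([0,t])}\ge R\}$ (with $\tau_{*}=T$ if the set is empty), set the stopped integrand $\tilde u_R(s):=u_R(s)\mathbf{1}_{\{s<\tau_{*}\}}$, and define $\tilde J(t):=\int_0^t S(t-s)(\tilde u_R(s)\,dW_s)$. Then $\|\tilde u_R(s)\|_{H^1}\le R$ almost surely, and $\tilde J=Ju_R$ on $[0,\tau_{*}]$. Taking $c_2\le c_1$, the key claim is $\{\|Ju_R\|_{X^1([0,T])}>R/2\}\subseteq\{\|\tilde J\|_{X^1([0,T])}>R/2\}$: on $\{\tau_{*}=T\}$ the two processes coincide on $[0,T]$; on $\{\tau_{*}<T\}$ path continuity of $\|u_R\|_{X^1([0,\cdot])}$ forces $\|u_R\|_{X^1([0,\tau_{*}])}=R$, and Lemma~\ref{lem:deterministic} applied on the subinterval $[0,\tau_{*}]$ (allowed since $\tau_{*}\le T\le c_1 R^{-4}$) forces $\|Ju_R\|_{X^1([0,\tau_{*}])}>R/2$, whence $\|\tilde J\|_{X^1([0,T])}>R/2$.

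It then suffices to control $\PP(\|\tilde J\|_{X^1([0,T])}>R/2)\le(2/R)^{\rho}\,\EE\|\tilde J\|^{\rho}_{X^1([0,T])}$ by Markov for $\rho$ large. The dominant piece of $X^1$ is $L^8_t W^{1,12/5}_x$; for it I would use Minkowski (valid since $\rho\ge 8$) to swap expectation and time, and then apply Lemma~\ref{lem:BDG} at each fixed $t$ with $X=W^{1,12/5}$ (type-2 UMD):
\begin{equation*}
\bigl(\EE\|\tilde J\|^{\rho}_{L^8_t W^{1,12/5}([0,T])}\bigr)^{1/\rho}\le\Bigl(\int_0^T\bigl(\EE\|\tilde J(t)\|^{\rho}_{W^{1,12/5}}\bigr)^{8/\rho}\,dt\Bigr)^{1/8}.
\end{equation*}
For the integrand of the resulting It\^o-isometry square function I would combine the dispersive estimate $\|S(t-s)g\|_{W^{1,12/5}}\lesssim(t-s)^{-1/4}\|g\|_{W^{1,12/7}}$, the ideal property, and the $L^{12/7}(\ell^2)$ representation of the $\gamma$-radonifying norm together with H\"older ($1/(12/7)=1/2+1/12$) and $\phi\in R(L^2;W^{1,12})$ to obtain $\|S(t-s)(\tilde u_R(s)\phi\,\cdot)\|_{R(L^2;W^{1,12/5})}\lesssim (t-s)^{-1/4}R$. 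Squaring, integrating in $s$ and raising to the $\rho/2$ power yields $\EE\|\tilde J(t)\|^{\rho}_{W^{1,12/5}}\lesssim \rho^{3\rho/2}R^{\rho} t^{\rho/4}$, and the $L^8_t$ integration then promotes $t^{1/4}$ to $T^{3/8}$, so $(\EE\|\tilde J\|^{\rho}_{L^8_t W^{1,12/5}})^{1/\rho}\lesssim \rho^{3/2}R\,T^{3/8}$. The $L^\infty_t H^1_x$ piece is handled analogously but with $X=H^1$ Hilbert, yielding the better BDG constant $\rho^{\rho/2}$ and the (worse but still negligible) time scaling $T^{\rho/2}$, hence a strictly smaller contribution at the eventually chosen $\rho$.

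Collecting everything, $\PP(\|Ju_R\|_{X^1([0,T])}>R/2)\lesssim(2C\rho^{3/2}T^{3/8})^{\rho}$, in which $R$ cancels between the Markov factor and the moment bound. Choosing $\rho=c\,T^{-1/4}$ with $c$ small makes the base $\le e^{-1}$ and produces $\exp(-cT^{-1/4})$; the exponent $\beta=1/4$ is precisely the ratio $(3/8)/(3/2)$ of the Minkowski-generated $T^{3/8}$ and the BDG-generated $\rho^{3/2}$. The main obstacle I anticipate is the $\gamma$-radonifying bookkeeping: verifying the UMD/type-2 properties of $W^{1,12/5}$ needed to invoke Lemma~\ref{lem:BDG}, computing the $R(L^2;W^{1,12/7})$ norm of the multiplication-by-$\tilde u_R(s)\phi$ operator through the square-function/$L^{12/7}(\ell^2)$ representation, and placing the gradient correctly inside the dispersive estimate so that the exponents line up to the critical combination $T^{3/8}\cdot\rho^{3/2}$.
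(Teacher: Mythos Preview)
Your approach is correct but genuinely different from the paper's. The paper never introduces a stopping time; instead it runs a \emph{closed moment estimate} on $B_T:=\|u_R\|_{L^\rho_\omega X^1([0,T])}$ directly, using the cutoff $\theta$ to bound the nonlinear term by $CT^{1/2}R^3$ (deterministically, not involving $B_T$) and bounding $\|Ju_R\|_{L^\rho_\omega X^1}$ and the It\^o correction by $C\rho^{3/2}T^{3/8}B_T$ and $CT^{7/8}B_T$ respectively. Solving the resulting linear inequality gives $B_T<R$ for $\rho=(100T)^{-1/4}$, and then Markov plus the already-proved bound $\|Ju_R\|_{L^\rho_\omega X^1}\lesssim \rho^{3/2}T^{3/8}B_T<\rho^{3/2}T^{3/8}R$ yields the tail. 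Your route trades this bootstrap for the stopping time $\tau_*$ and a pathwise invocation of Lemma~\ref{lem:deterministic} (in contrapositive) to force the inclusion $\{\|Ju_R\|_{X^1([0,T])}>R/2\}\subset\{\|\tilde J\|_{X^1([0,T])}>R/2\}$; this is legitimate since Lemma~\ref{lem:deterministic} is purely pathwise, $\tau_*(\omega)\le T\le c_1R^{-4}$ when $c_2\le c_1$, and the $X^1$ norm is monotone in the time interval. The payoff is that you avoid the self-referential estimate on $B_T$ and can bound the stopped integral $\tilde J$ directly using only $\|\tilde u_R(s)\|_{H^1}\le R$; the paper's payoff is that it never needs Lemma~\ref{lem:deterministic} inside this proof and obtains the auxiliary statement $\|u_R\|_{L^\rho_\omega X^1([0,T])}<R$ along the way. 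After this divergence both proofs perform the identical BDG $+$ dispersive computation on the stochastic convolution, land on $(C\rho^{3/2}T^{3/8})^\rho$, and optimize $\rho\sim T^{-1/4}$. Your observation that the $L^\infty_tH^1_x$ piece admits the sharper Hilbert BDG constant $\rho^{\rho/2}$ is correct (the paper simply reuses $\rho^{3\rho/2}$ there), though it does not affect the final exponent.
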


\begin{proof}[Proof of Prop \ref{prop:main} assuming Lemma \ref{lem:deterministic} and Lemma \ref{lem:stochastic}]
  Take $R$ large enough such that the assumption of Lemma \ref{lem:deterministic} and \ref{lem:deterministic} hold and fix this $R$. Let $T_0=c_0 R^{-4}$ here with $c_0=\min(c_1,c_2)$. By Lemma \ref{lem:deterministic}, for all $0<T\le T_0$, if $\|u_R \|_{X^1([0,T])}\ge R$ then one gets $\|Ju_R\|_{X_T} > \frac{R}{2}$ so combine Lemma \ref{lem:stochastic} we have
  \begin{equation}\label{eq:log_Markov}
    \ln \mathbb{P}(\|u_{R}\|_{X^1([0,T])}\ge R)\le \ln \mathbb{P}(\|Ju_R\|_{X^1([0,T])} > \frac{R}{2})\lesssim -T^{-1/4}.
  \end{equation}
\end{proof}

Lemma \ref{lem:deterministic} is purely deterministic, and relies on the stability arguments for NLS while Lemma \ref{lem:stochastic} relies on BDG inequality with the constants $\rho^{3\rho /2}$ (estimate \eqref{eq:BDG}). We will present proofs of these lemmas in the next section.

\begin{rem}\label{rem:alphabeta}
  If one can prove estimate of type \eqref{eq:BDG} with better constant $\rho^{\alpha \rho}$, one may prove Thm \ref{thm:main} with $\beta = \frac{3}{8\alpha}$ with essentialy the same proof in this article.  For example,  one can conclude from (\cite{Pinelis1994Optimum}, Thm 4.3) that $\alpha = \frac{1}{2}, \beta = \frac{3}{4}$. We thank Prof.  Yi Huang for telling us this reference.
\end{rem}

\begin{rem}
  From the proof of Prop \ref{prop:main} and Lemma \ref{lem:stochastic}, one can know that we have $T_0\sim \|u_0\|^{-4}_{H^1}$ in Thm \ref{thm:main}.
\end{rem}

\section{Proof of Lemma \ref{lem:deterministic}, \ref{lem:stochastic}}
 Some technical estimates are useful here. By Sobolev embedding, H\"{o}lder inequality, Lemma \ref{lem:dispersive} and \ref{lem:Strichartz} above we get
  \begin{enumerate}
    \item For all $0<s<t$
    \begin{equation}\label{lem:estimate1}
      \|S(t-s) f\|_{L_x^{12/5}}\lesssim |t-s|^{-1/4} \|f\|_{L_x^{12/7}}.
    \end{equation}
    \item For all $t>0$
    \begin{equation}\label{lem:estimate2}
      \|S(t) f\|_{X^1([0,t])} \lesssim \|f\|_{H^1_x}.
    \end{equation}
    \item For all $t>0$
    \begin{equation}\label{lem:estimate3}
      \||u|^2 u\|_{L_t^{8/7}W_x^{1,12/7}([0,t])} \lesssim \|\|u\|^3_{W_x^{1,12/5}}\|_{L_t^{8/7}([0,t])}\lesssim t^{1/2}\|u\|^3_{X^1_2([0,t])}
    \end{equation}
    \item For all $t>0$
      \begin{equation}
        \|\int_0^t S(t-s)(u(s)F_\phi) ds\|_{X^1([0,t])}\lesssim \|u F_\phi\|_{L_t^{1}H_x^1([0,t])} \lesssim \|\|u\|_{W_x^{1,12/5}} \|F_\phi\|_{W^{1,12}}\|_{L_t^{1}([0,t])}
      \end{equation}
      and note that
      \begin{equation}
        \|F_\phi\|_{W^{1,12}}\le \sum_{k=0}^\infty \|\phi e_k\|^2_{W^{1,12}} \le \|\phi\|^{2}_{R(L^2;W^{1,12})}\lesssim 1
      \end{equation}
      together we have
      \begin{equation}\label{lem:estimate4}
        \|\int_0^t S(t-s)(u(s)F_\phi) ds\|_{X^1([0,t])} \lesssim \|u\|_{L_t^{1}W_x^{1,12/5}([0,t])} \lesssim t^{7/8} \|u\|_{X_2^1([0,t])}
      \end{equation}
  \end{enumerate}

Now we start with Lemma \ref{lem:deterministic}.

\subsection{Proof of Lemma \ref{lem:deterministic}}

take $X^1([0,T])$ norm on both sides of equation \eqref{eq:truncated_SNLS_solution} and almost surely we have
  \begin{eqnarray*}
   &\|u_R(t)\|_{X^1([0,T])} &\le \|S(t)u_0\|_{X^1([0,T])}+\|\int_0^t S(t-s) \theta(\frac{\|u_R\|_{X^1([0,s])}}{R}) |u_R|^2u_R(s) ds\|_{X^1([0,T])}\\
  & &+\|Ju_R\|_{X^1([0,T])}+\frac{1}{2}\|\int_0^t S(t-s)(u_R(s)F_\phi) ds\|_{X^1([0,T])}.
  \end{eqnarray*}
  We claim
  \begin{enumerate}
    \item
    \begin{equation}\label{claim:1-1}
      \|S(t)u_0\|_{X^1([0,T])}\leq C \| u_0\|_{H^1},
    \end{equation}
    \item 
    \begin{equation}\label{claim:1-2}
      \|\int_0^t S(t-s) \theta(\frac{\|u_R\|_{X^1([0,s])}}{R}) |u_R|^2u_R(s) ds\|_{X^1([0,T])}\le C T^{1/2}R^3,
    \end{equation}
    \item 
    \begin{equation}\label{claim:1-3}
      \|Ju_R\|_{X^1([0,T])}\le \|Ju_R\|_{X_T}\leq \frac{R}{2},
    \end{equation}
    \item 
    \begin{equation}\label{claim:1-4}
      \|\int_0^t S(t-s)(u_R(s)F_\phi) ds\|_{X^1([0,T])} \lesssim T^{7/8}\|u_R\|_{X^1([0,T])}.
    \end{equation}
  \end{enumerate}

  \begin{proof}[Proof of Claims 1-4]
    \eqref{claim:1-1} follows from \eqref{lem:estimate2}. \eqref{claim:1-2} follows from Lemma \ref{lem:Strichartz} 
    \begin{equation}
      \begin{array}{lll}
        \|\int_0^t S(t-s) \theta(\frac{\|u_R\|_{X^1([0,s])}}{R}) |u_R|^2u_R(s) ds\|_{X^1([0,T])}&\le& C \|\theta(\frac{\|u_R\|_{X^1([0,s])}}{R}) |u_R|^2u_R(s)\|_{L_t^{8/7}W_x^{1,12/7}}\\
        &\le& C \||u_R|^2u_R(s)\|_{L_t^{8/7}W_x^{1,12/7}},
      \end{array}
    \end{equation}
    and nonlinear estimate \eqref{lem:estimate3} (Note that one has $\|u_R\|_{X_2^1([0,T])}\le 2R$ or otherwise $\theta(\frac{\|u_R\|_{X^1([0,s])}}{R})=0$ here). \eqref{claim:1-3} follows from the assumption of this lemma. \eqref{claim:1-4} follows from \eqref{lem:estimate4}.
  \end{proof}

  Now, let $A_T:=\|u_R\|_{X^1([0,T])}$. To summarize, we have
  \begin{equation}
    A_{T} \le C\| u_0 \|_{H^1} +C T^{1/2}R^3 + \frac{R}{2}+ CT^{7/8}A_{T},
  \end{equation}
  \begin{equation}
    A_{T} \le \frac{C\| u_0 \|_{H^1} +C T^{1/2}R^3 + \frac{R}{2}}{1 - CT^{7/8}}.
  \end{equation}
  Note that $T \le c_1 R^{-4}$, we have
  \begin{equation}
    A_{T} \le \frac{C\frac{\| u_0 \|_{H^1}}{R} +Cc_1^{1/2}+ \frac{1}{2}}{1 - Cc_1^{7/8}R^{7/2}} R .
  \end{equation}
  Finally for all fixed $R>R_0(\| u_0 \|_{H^1})$ sufficiently large such that $C \frac{\| u_0 \|_{H^1}}{R}$ is small enough, one can take $c_1$ small enough to get $A_{T} < R$.

\subsection{Proof of Lemma \ref{lem:stochastic}}
Our first aim is to show that $\|u_R\|_{L^\rho_\omega X^1([0,T])}< R$ if we take $c_2$ sufficiently small and $\rho= (100 T)^{-1/4}$. To do this one needs to take $L^\rho_\omega X^1([0,T])$ norm on both sides of equation \eqref{eq:truncated_SNLS_solution} to get
  \begin{eqnarray*}
   &\|u_R(t)\|_{L^\rho_\omega X^1([0,T])} &\lesssim \|S(t)u_0\|_{L^\rho_\omega X^1([0,T])}+\|\int_0^t S(t-s)\theta(\frac{\|u_R\|_{X^1([0,s])}}{R})|u_R|^2u_R(s) ds\|_{L^\rho_\omega X^1([0,T])}\\
   & &+\|Ju_R\|_{L^\rho_\omega X^1([0,T])}+\|\int_0^t S(t-s)(u_R(s)F_\phi) ds\|_{L^\rho_\omega X^1([0,T])}.
  \end{eqnarray*}

  and we claim the following estimates
  \begin{enumerate}
    \item \begin{equation}\label{claim:2-1}
    \|S(t)u_0\|_{L^\rho_\omega X^1([0,T])}\lesssim \|u_0\|_{H^1},
    \end{equation}
    \item \begin{equation}\label{claim:2-2}
    \|\int_0^t S(t-s) \theta(\frac{\|u_R\|_{X^1([0,s])}}{R}) |u_R|^2u_R(s) ds\|_{L^\rho_\omega X^1([0,T])}\lesssim T^{1/2}R^3,
    \end{equation}
    \item \begin{equation}\label{claim:2-3}
      \|Ju_R\|_{L^\rho_\omega X^1([0,T])}\lesssim \rho^{\frac{3}{2}} T^{3/8} \|u_R\|_{L^\rho_\omega X^1([0,T])},
    \end{equation}
    \item \begin{equation}\label{claim:2-4}
       \|\int_0^t S(t-s)(u_R(s)F_\phi) ds\|_{L^\rho_\omega X^1([0,T])} \lesssim T^{7/8} \|u_R\|_{L^\rho_\omega X^1([0,T])}.
    \end{equation}
  \end{enumerate}

  \begin{proof}[proof of the claim 1,2,4]
    \eqref{claim:2-1},\eqref{claim:2-2}, \eqref{claim:2-4} follows from \eqref{lem:estimate2}, \eqref{claim:1-2}, \eqref{claim:1-4} and take $L^\rho_\omega$ norm on the both side respectively.
  \end{proof}

  \begin{proof}[proof of the claim 3]
    \eqref{claim:2-3} is rather involved and it can be divided into two estimates
    \begin{equation}\label{claim:2-3-1}
          \|Ju_R\|_{L^\rho_\omega L^8_t W_x^{1,12/5}([0,T])}\lesssim \rho^{\frac{3}{2}} T^{3/8} \|u_R\|_{L^\rho_\omega L_t^\infty H^1_x([0,T])}
    \end{equation}
  \begin{equation}\label{claim:2-3-2}
          \|Ju_R\|_{L^\rho_\omega L_t^\infty H^1_x([0,T])}\lesssim \rho^{\frac{3}{2}} T^{3/8} \|u_R\|_{L^\rho_\omega L^8_t W_x^{1,12/5}([0,T])}
    \end{equation}
    Firstly one needs the following type of BDG inequality deriving from \eqref{eq:BDG2}
    \begin{equation}\label{eq:derivative_BDG}
      \mathbb{E}(\|\int_0^t S(t-s)u_R(s)dW_s\|^\rho_{W^{1,12/5}})\lesssim \rho^{3\rho/2}\mathbb{E}(\int_0^t \|S(t-s)(u_R \phi \, \cdot)\|^2_{R(L^2;W^{1,12/5})} ds)^{\rho/2}.
    \end{equation}
    This estimate is true because one has the following derivative estimate
    \begin{equation}\label{eq:derivative_estimate}
      \mathbb{E}(\|\nabla \int_0^t S(t-s)u_R(s)dW_s\|^\rho_{L^{12/5}})= \mathbb{E}(\|\int_0^t S(t-s)\nabla (u_R(s)dW_s)\|^\rho_{L^{12/5}}),
    \end{equation}
    Using \eqref{eq:BDG2}, equation \eqref{eq:derivative_estimate} can be controlled by
    \begin{equation}
      \lesssim \rho^{3\rho/2}\mathbb{E}(\int_0^t \|S(t-s)\nabla(u_R\phi \, \cdot)\|^2_{R(L^2;L^{12/5})} ds)^{\rho/2},
    \end{equation}
    and this term can be controlled by the RHS of \eqref{eq:derivative_BDG} because
    \begin{equation}
        \| S(t-s)(u_R \phi \, \cdot)\|^2_{R(L^2;W^{1,12/5})} = \|S(t-s)(u_R \phi \, \cdot)\|^2_{R(L^2;L^{12/5})} + \|S(t-s)\nabla (u_R \phi \, \cdot)\|^2_{R(L^2;L^{12/5})}.
    \end{equation}
    Next, one needs the following dispersive estimate deriving from \eqref{lem:estimate1}
    \begin{equation}\label{eq:derivative_dispersive}
      \|S(t-s) f\|_{W^{1,12/5}}\lesssim |t-s|^{-1/4} \|f\|_{W^{1,12/7}},
    \end{equation}
    
    Denote linear operator $K:W^{1,12} \cap H^1 \to W^{1,12/5}$, $K= S(t-s)(u_R \, \cdot)$, from \eqref{eq:ideal} one has
    \begin{equation}\label{eq:S_norm}
      \| S(t-s)(u_R \phi \, \cdot)\|^2_{R(L^2;W^{1,12/5})} \le \| K\| \|\phi \|_{R(L^2;W^{1,12} \cap H^1)}\lesssim \|K\|
    \end{equation}
    Thus we need to compute the operator norm of $K$. For all $v \in W^{1,12} \cap H^1$ one has
    \begin{equation}
      \|S(t-s)(u_R v)\|_{W^{1,12/5}}\lesssim |t-s|^{-1/4} \|u_R v\|_{W^{1,12/7}} \lesssim |t-s|^{-1/4} \|u_R \|_{H^1} \|v \|_{W^{1,12}},
    \end{equation}
    where we use \eqref{eq:derivative_dispersive}. This gives
    \begin{equation}\label{eq:K_norm}
      \|K\|\lesssim |t-s|^{-1/4} \|u_R \|_{H^1}.
    \end{equation}
    From \eqref{eq:derivative_BDG}, \eqref{eq:S_norm}, \eqref{eq:K_norm}, one has
    \begin{equation}\label{eq:BDG_t}
        \mathbb{E}(\|Ju_R\|^\rho_{W^{1,12/5}})=\mathbb{E}(\|\int_0^t S(t-s)u_R(s)dW_s\|^\rho_{W^{1,12/5}})
        \lesssim \rho^{3\rho/2} \mathbb{E}(\int_0^t |t-s|^{-1/2} \|u_R\|^2_{H^1} ds)^{\rho/2}
        \lesssim \rho^{3\rho/2} t^{\rho/4} \|u_R\|^{\rho}_{L_\omega^\rho H_x^1} 
    \end{equation}
    and we have
    \begin{equation}
        \|Ju_R\|_{L^\rho_\omega L^8_t W_x^{1,12/5}([0,T])} = (\mathbb{E}(\int_{0}^{T}\|Ju_R\|^8_{W_x^{1,12/5}} )^{\rho/8})^{1/\rho} \lesssim (\int_{0}^{T}\mathbb{E}(\|Ju_R\|^\rho_{W^{1,12/5}_x})^{8/\rho} ds)^{1/8}
    \end{equation}
    where we use the Minkowski inequality and assume $\rho \ge 8$ here. Combine this with \eqref{eq:BDG_t} we have 
    \begin{equation}
      \|Ju_R\|_{L^\rho_\omega L^8_t W_x^{1,12/5}([0,T])} \lesssim (\int_0^T \rho^{12} t^{2} \|u_R\|_{L_\omega^\rho H_x^1}^{8}dt)^{1/8} \lesssim \rho^{3/2} T^{3/8}\|u_R\|_{L_t^\infty L_\omega^\rho H_x^1} \lesssim \rho^{3/2} T^{3/8}\|u_R\|_{L_\omega^\rho L_t^\infty H_x^1},
    \end{equation}
    where we use the Minkowski inequality once again in the last inequality. This ends the proof of \eqref{claim:2-3-1}. For \eqref{claim:2-3-2}, one can argue as above using BDG inequality \eqref{eq:BDG} to get
    \begin{equation}
      \mathbb{E}(\sup_{0\le t\le T}\|Ju_R\|^\rho_{H^1})=\mathbb{E}(\sup_{0\le t\le T} \|\int_0^t S(t-s)u_R(s)dW_s\|^\rho_{H^1}) \lesssim \rho^{3\rho/2} \mathbb{E}(\sup_{0\le t\le T} \int_0^t \|G\|^2 ds)^{\rho/2}
    \end{equation}
    where $G$ is the linear operator $G:W^{1,12} \cap H^1 \to H^1$, $G= S(t-s)(u_R \, \cdot)$. For all $v \in W^{1,12} \cap H^1 $ one has
    \begin{equation}
      \|S(t-s)(u_R v)\|_{H^1}= \|u_R v\|_{H^1} \lesssim \|u_R \|_{W^{1,12/5}} \|v \|_{W^{1,12}}
    \end{equation}
    so $\|G\| \lesssim \|u_R \|_{W^{1,12/5}} $, and we have
    \begin{equation}
      \mathbb{E}(\sup_{0\le t\le T} \|Ju_R\|^\rho_{H^1})\lesssim \rho^{3\rho/2} \mathbb{E}(\sup_{0\le t\le T} \int_0^t \|u_R \|_{W^{1,12/5}}^2 ds)^{\rho/2} =\rho^{3\rho/2} \mathbb{E}(\int_0^T \|u_R \|_{W^{1,12/5}}^2 ds)^{\rho/2}
    \end{equation}
    Use H\"{o}lder inequality to get
    \begin{equation}
      \mathbb{E}(\sup_{0\le t\le T} \|Ju_R\|^\rho_{H^1})\lesssim \rho^{3\rho/2} T^{3\rho/8}\mathbb{E}(\int_0^T \|u_R \|^8_{W^{1,12/5}} ds)^{\rho/8},
    \end{equation} 
    This ends our proof if $\rho=(100 T)^{-1/4} \ge 8$, which can be obtained if we let $c_2$ small enough.
  \end{proof}

  Let $B_T:=\|u_R(t)\|_{L^\rho_\omega X^1([0,T])}$. To summarize, we have
  \begin{equation}
    B_T \le C\|u_0\|_{H^1} + CT^{1/2} R^3+ C\rho^{3/2} T^{3/8} B_T+CT^{7/8} B_T,
  \end{equation}

  \begin{equation}
    B_T\le \frac{C\|u_0\|_{H^1} + CT^{1/2} R^3}{1-C\rho^{3/2} T^{3/8}- CT^{7/8}}
  \end{equation}
  Recall $T\le c_2 R^{-4}$ and we have
  \begin{equation}
    B_T\le \frac{C\frac{\|u_0\|_{H^1}}{R} + C c_2^{1/2}}{1-C\rho^{3/2} c_2^{3/8} R^{-3/2}- C c_2^{7/8} R^{-7/2}} R
  \end{equation}

  Finally for all fixed $R>R_0(\| u_0 \|_{H^1})$ sufficiently large such that $C \frac{\| u_0 \|_{H^1}}{R}$ is small enough, one can take $c_2$ small enough to get $B_{T} < R$, which ends our aim. Now use Markov inequality and \eqref{claim:2-3} to get
  \begin{equation}\label{eq:Markov}
    \mathbb{P}(\|Ju_R\|_{X^1([0,T])} > \frac{R}{2})
    \le \frac{\|Ju_R\|_{L^\rho_\omega X^1([0,T])}}{(R/2)^\rho} 
    \le C\frac{\rho^{\frac{3\rho}{2}}T^{\frac{3\rho}{8}}R^\rho }{(R/2)^\rho}
    =Ce^{\rho \ln 2+\frac{3\rho}{8} \ln(\rho^4 T)}
    = C e^{-C T^{-1/4}}.
  \end{equation}
  Take logarithm of \eqref{eq:Markov} to yield 
  \begin{equation}
    \ln \mathbb{P}(\|Ju_R\|_{X^1([0,T])} > \frac{R}{2})\le \ln C -C T^{-1/4} \lesssim -T^{-1/4}.
  \end{equation}
  Here the last inequality holds if $c_2$ is small enough, which is our desired result \eqref{eq:log_Markov}.
  
\bibliographystyle{siam}
\bibliography{refers}

\end{document}